\newtheorem{theorem}{Theorem}
\newtheorem{definition}{Definition}
\newtheorem{lemma}{Lemma}
\newtheorem{example}{Example}
\newtheorem{remark}{Remark}
\newtheorem{assumption}{Assumption}
\newcommand{\RR}{{\mathbb{R}}}
\newcommand{\NN}{{\mathbb{N}}}
\newcommand{\EE}{{\mathbb{E}}}
\newcommand{\PP}{{\mathbb{P}}}
\newcommand{\JJ}{{\mathbb{J}}}
\newcommand{\FF}{{\mathbb{F}}}
\newcommand{\mc}{\mathcal}
\newcommand{\norm}[1]{\left\|#1\right\|}
\newcommand{\EEk}[1]{\EE\left[#1|\mc F_k\right]}
\newcommand{\op}{\operatorname}
\newcommand{\bs}{\boldsymbol}
\newcommand{\fineass}{\hfill\small$\blacksquare$}
\title{\LARGE \bf
Distributed projected--reflected--gradient algorithms for stochastic generalized Nash equilibrium problems
}
\author{Barbara Franci and Sergio Grammatico
\thanks{The authors are with the Delft Center for System and Control, TU Delft, The Netherlands
        {\tt\footnotesize \{b.franci-1, s.grammatico\}@tudelft.nl}}%
\thanks{This work was partially supported by NWO under research projects OMEGA (613.001.702) and P2P-TALES (647.003.003), and by the ERC under research project COSMOS (802348).}}
\begin{document}

\maketitle
\thispagestyle{empty}
\pagestyle{empty}

\begin{abstract}
We consider the stochastic generalized Nash equilibrium problem (SGNEP) with joint feasibility constraints and expected--value cost functions. We propose a distributed stochastic projected reflected gradient algorithm and show its almost sure convergence when the pseudogradient mapping is monotone and the solution is unique. The algorithm is based on monotone operator splitting methods tailored for SGNEPs when the expected-value pseudogradient mapping is approximated at each iteration via an increasing number of samples of the random variable.
Finally, we show that a preconditioned variant of our proposed algorithm has convergence guarantees when the pseudogradient mapping is cocoercive. 
\end{abstract}

\section{Introduction}

Stochastic generalized Nash equilibrium problems (SGNEPs) have received some attention from the system and control community \cite{koshal2013,yu2017,franci2020tac}. In a SGNEP, a set of agents interacts with the aim of minimizing their expected-value cost functions while subject to some joint feasibility constraints. Moreover, both the cost function and the constraints depend on the strategies of the other agents. SGNEPs arise when there is some uncertainty, modeled via a random variable with an unknown distribution. The interest on the topic is related to their possible applications in networked Cournot game with market capacity constraints and uncertain demand \cite{abada2013} as transportation systems, where the drivers perception of travel time is uncertain \cite{watling2006}, and electricity markets where companies schedule their production without knowing the demand \cite{henrion2007}.

When the random variable is known, the expected--value formulation can be solved via a standard technique for deterministic GNEPs \cite{yi2019,belgioioso2018}.
In fact, one possible approach is to recast the problem as a stochastic variational inequality (SVI) through the use of the Karush--Kuhn--Tucker conditions. Then, the problem can be rewritten as a monotone inclusion and solved via operator splitting techniques. 
The difficulty in the stochastic case is that the pseudogradient is usually not directly accessible, for instance because the expected value is hard to compute. For this reason, in many situations, the search for a solution of a SVI relies on samples of the random variable.
One of the most used approximation schemes in the literature is the stochastic approximation (SA). 
In this case, the approximation is done by using only one (or a finite number of) realization of the random variable. SA was first presented in \cite{robbins1951}, it is not computationally expensive even if sometimes it requires strong assumptions on the pseudogradient mapping and on the parameters \cite{koshal2013,cui2016}. 

Depending on the monotonicity assumptions on the pseudogradient mapping or the affordable computational complexity, there are different algorithms that can be used to find a SGNE. Among others, one can consider the stochastic preconditioned forward--backward (SpFB) algorithm \cite{franci2020tac} which is guaranteed to converge under cocoercivity of the pseudogradient and by demanding one projection step per iteration. However, cocoercivity is not the weakest possible assumption, hence one would like to have an algorithm that converges under mere monotonicity. Under this assumption, the stochastic forward--backward--forward (SFBF) algorithm involves only one projection step per iteration but two costly evaluation of the pseudogradient mapping \cite{staudigl2019}. Another alternative is the stochastic extragradient (SEG) algorithm whose iterates are characterized by two projection steps and two evaluation of the pseudogradient mapping \cite{iusem2017,cui2019}. To summarize, having weaker assumptions comes at the price of implementing computationally expensive algorithms.

In this paper, we propose two special instances of the stochastic projected reflected gradient (SPRG) algorithm for SGNEPs. The SPRG involves only one projection step and one evaluation of the pseudogradient, computed in the reflection of the second-last iterate on the last one, i.e., it uses the previous two iterates for the updates.
Specifically, our contributions are the following.
\begin{itemize}
\item We exploit two splitting techniques to obtain two distributed algorithms, with and without preconditioning, that are instances of the SPRG scheme.
\item We prove convergence of the first algorithm to a SGNE if the pseudogradient mapping is monotone and the solution is unique (Section \ref{sec_conv_uni}), differently from \cite{cui2019} where it is assumed to be monotone and ``weak sharp''.
\item The weak sharpness property, discussed in Section \ref{sec_coco}, is hard to check on the problem data but it is guaranteed to hold on a cocoercive operator. Therefore, we prove convergence of the second, preconditioned algorithm when the pseudogradient mapping is cocoercive (Section \ref{sec_conv_coco}).
\end{itemize}
We note that PRG was first presented for deterministic VI in \cite{malitsky2015} and for SVI in \cite{cui2016,cui2019}.
Nevertheless, this is the first time that it is designed for SGNEPs, i.e. a non-trivial instance of SVI with coupling constraints between the agents. Remarkably, under uniqueness of the solution, our algorithm has convergence guarantees also for merely monotone pseudogradient mappings. This is a significant advantage compared to the SpFB which may not converge in that case.

\textit{Notation and preliminaries:} We use the same notation as \cite{franci2020tac,bau2011,facchinei2007} and we recall that 
for all $x,y\in C$, a mapping $F:C\subseteq\RR^n\to\RR^n$ is \cite{bau2011,facchinei2007}: \textit{pseudomonotone} if $(x-y)^{T} F(y) \geq 0 \Rightarrow(x-y)^{T} F(x) \geq 0$; \textit{monotone} if $(F(x)-F(y))^{T}(x-y) \geq 0$; \textit{maximally monotone} if there exists no monotone operator $G :C\to\RR^n$ such that the graph of $G$ properly contains the graph of $F$; $\beta$-\textit{cocoercive} if for $\beta>0$ $(F(x)-F(y))^{T}(x-y) \geq \beta\|F(x)-F(y)\|^{2}$; $L$-\textit{Lipschitz continuous} if for $L>0$ $\norm{F(x)-F(y)} \leq L\norm{x-y}$. 
\vspace{-.1cm}
\section{Stochastic generalized Nash equilibrium problem}
We consider a set of agents $\mc I=\{1,\dots,N\}$, each of them choosing its decision variable $x_i\in\RR^{n_i}$ from its local decision set $\Omega_i\subseteq\RR^{n_i}$. The aim of each agent is to minimize its local cost function $\JJ_i: \RR^{n} \rightarrow \RR$ defined as
$$\JJ_i(x_i,\boldsymbol x_{-i}):=\EE_\xi[J_i(x_i,\boldsymbol x_{-i},\xi(\omega))]$$
for some measurable function $J_i:\mc \RR^{n}\times \RR^d\to \RR$ and $n=\sum_{i=1}^N n_i$. We note that the cost function depends on the local variable $x_i$, on the decisions of the other agents $\bs x_{-i}=\op{col}((x_j)_{j \neq i})$ and on the random variable $\xi:\Xi\to\RR^d$ which expresses the uncertainty in the cost function. Given the associated probability space $(\Xi, \mc F, \PP)$, $\EE_\xi$ represents the mathematical expectation with respect to the distribution of $\xi$\footnote{For brevity, we use $\xi$ instead of $\xi(\omega)$, $\omega\in\Xi$, and $\EE$ instead of $\EE_\xi$.}. We assume that $\EE[J_i(\boldsymbol x,\xi)]$ is well defined for all the feasible $\boldsymbol x=\op{col}(x_1,\dots,x_N)$. 

The cost function should satisfy some assumptions, postulated next, to make our analysis possible. Such assumptions are standard for (stochastic) Nash equilibrium problems \cite{ravat2011}.
\begin{assumption}\label{ass_J}
For each $i \in \mc I$ and $\boldsymbol{x}_{-i} \in \mc{X}_{-i}$ $\JJ_{i}(\cdot, \boldsymbol{x}_{-i})$ is convex and continuously differentiable. \\
For each $i\in\mc I$ and $\xi \in \Xi$, $J_{i}(\cdot,\boldsymbol x_{-i},\xi)$ is convex, Lipschitz continuous and continuously differentiable. For each $\boldsymbol x_{-i}$, $J_{i}(x_i,\bs x_{-i},\cdot)$ is measurable and Lipschitz continuous with constant $\ell_i(\boldsymbol x_{-i},\xi)$, integrable in $\xi .$ \fineass
\end{assumption}

Since we consider a SGNEP, we introduce affine shared constraints of the form $A\bs x\leq b$, where $A=\left[A_{1}, \ldots, A_{N}\right]\in\RR^{m\times n}$, $A_i \in \RR^{m \times n_i}$ indicates how agent $i$ is involved in the coupling constraints and $b\in\RR^m$. Thus, we denote each agent $i \in \mc I$  feasible decision set with the set-valued mapping 
\begin{equation}\label{eq_constr}
\mc{X}_i(\boldsymbol x_{-i}) :=\{y_i \in \Omega_i\; | \;\textstyle{A_i y_i \leq b-\sum_{j \neq i}^{N} A_j x_j}\},
\end{equation}
and the collective feasible set as
\begin{equation}\label{collective_set}
\bs{\mc{X}}=\bs\Omega \cap\left\{\boldsymbol{y} \in \RR^{n} | A\boldsymbol{y}-b \leq {\bf{0}}_{m}\right\}
\end{equation}
where $\bs\Omega=\prod_{i=1}^N\Omega_i$.
We suppose that there is no uncertainty in the constraints.
Standard assumptions for the constraints sets are postulated next \cite{facchinei2007vi}.

\begin{assumption}\label{ass_X}
For each $i \in \mc I,$ $\Omega_{i}$ is nonempty, compact and convex.
$\bs{\mc{X}}$ satisfies Slater's constraint qualification. \fineass
\end{assumption}

The aim of each agent $i$, given the decision variables of the other agents $\bs x_{-i}$, is to choose a decision $x_i$, that solves its local optimization problem, i.e.,
\begin{equation}\label{eq_game}
\forall i \in \mc I:\quad\left\{\begin{array}{cl}
\min\nolimits_{x_i \in \Omega_i} & \JJ_i(x_i,\boldsymbol x_{-i}) \\ 
\text { s.t. } & A_i x_i \leq b-\sum_{j \neq i}^{N} A_j x_j.
\end{array}\right.
\end{equation}
From a game-theoretic perspective, we aim at computing a stochastic generalized Nash equilibrium (SGNE) \cite{ravat2011}.
\begin{definition}
A collective variable $\boldsymbol{x}^*\in\bs{\mc X}$ is a stochastic generalized Nash equilibrium if, for all $i \in \mc I$:
$$\JJ_i(x_i^{*}, \boldsymbol x_{-i}^{*}) \leq \inf \{\JJ_i(y, \boldsymbol x_{-i}^{*})\; | \; y \in \mc{X}_i(\boldsymbol x_{-i}^{*})\}.$$
\end{definition}

When Assumptions \ref{ass_J}-\ref{ass_X} hold, existence of a SGNE of the game in \eqref{eq_game} is guaranteed by \cite[\S 3.1]{ravat2011}.

Among all the possible Nash equilibria, we focus on those that are also solutions of an associated (stochastic) variational inequality. 
First let us define the pseudogradient mapping as 
\begin{equation}\label{eq_grad}
\FF(\boldsymbol x)=\op{col}\left(\EE[\nabla_{x_{i}} J_{i}(x_{i}, \boldsymbol x_{-i},\xi_i)]_{i\in\mc I}\right),
\end{equation} 
where the exchange between the expected value and the gradient is possible because of Assumption \ref{ass_J} \cite[Lem. 3.4]{ravat2011}. 
Then, the stochastic variational inequality $\op{SVI}(\bs{\mc X},\FF)$ is the problem of finding $\bs x^{*} \in \bs{\mc X}$ such that 
\begin{equation}\label{eq_vi}
\langle \FF(\bs x^*),\bs x-\bs x^*\rangle\geq 0, \;\text { for all } \bs x \in \bs{\mc X}.
\end{equation}
with $\FF(\bs x)$ as in \eqref{eq_grad} and $\bs{\mc X}$ as in \eqref{eq_constr}.
We also note that any solution of $\op{SVI}(\bs{\mc X} , \FF)$ is a SGNE of the game in (\ref{eq_game}) while the opposite does not hold in general. 
\begin{remark}
Under Assumptions \ref{ass_J}-\ref{ass_X}, the solution set of $\op{SVI}(\bs{\mc X},\FF)$ is non empty and compact, i.e. $\op{SOL}(\bs{\mc X},\FF)\neq\varnothing$ \cite[Corollary 2.2.5]{facchinei2007}.
\fineass\end{remark}

\vspace{-.1cm}
\subsection{Operator-theoretic characterization}
In this section, we rewrite the SGNEP as a monotone inclusion, i.e., the problem of finding a zero of a set-valued monotone operator.
To this aim, we characterize the SGNE of the game in terms of the Karush--Kuhn--Tucker (KKT) conditions for the coupled optimization problems in \eqref{eq_game}. Let us denote with $\lambda_i \in \RR_{ \geq 0}^{m}$ the dual variable associated with the coupling constraints. Then, the associated KKT optimality conditions reads as \cite{facchinei2007}
\begin{equation}\label{game_kkt}
\forall i \in \mc I: \left\{\begin{array}{l}
0 \in \EE[\nabla_{x_i} J_i(x_i^{*}, \boldsymbol x_{-i}^{*},\xi)]+\mathrm{N}_{\Omega_i}\left(x_i^{*}\right)+A_i^{\top} \lambda_i \\
0\in -(A\boldsymbol x^*-b)+\mathrm N_{\RR^m_{\geq 0}}(\lambda^*).
\end{array}\right.
\end{equation}
%
%
From \cite[Th. 3.1]{facchinei2007vi}, it follows that $\boldsymbol x^*$ is a solution of the SGNEP if the KKT conditions hold with
$\lambda_1=\lambda_2=\dots=\lambda_N=\lambda^*$, i.e., $\boldsymbol x^*$ is a solution of $\op{SVI} (\bs{\mc X}, \FF)$ in \eqref{eq_vi}.
In words, \cite[Th. 3.1]{facchinei2007vi} says that variational equilibria are those such that the shared constraints have the same dual variable $\lambda^*$ for all the agents. 
We conclude this section writing the KKT conditions as 
\begin{equation}\label{eq_T}
0\in\mc T(\bs x,\bs \lambda):=\left[\begin{array}{c}
\mathrm{N}_{\Omega}(\boldsymbol x)+\FF(\boldsymbol x)+A^{\top}\bs \lambda \\ 
\mathrm{N}_{\RR_{ \geq 0}^{m}}(\bs\lambda)-(A \boldsymbol x-b)
\end{array}\right], 
\end{equation}
where $\mc T:\bs{\mc X}\times \RR^m_{\geq 0}\rightrightarrows \RR^{n}\times\RR^m$ is a set-valued mapping. This notation in helpful to obtain the algorithms and for the convergence analysis.
\vspace{-.1cm}
\subsection{Approximation scheme}
Since the distribution of the random variable is unknown, we replace the expected value with an approximation. For the convergence analysis, we use the stochastic approximation (SA) scheme. 
We assume to have access to a pool of i.i.d. samples of the random variable collected, for all $k\in\NN$ and for each agent $i\in\mc I$, in the vectors $\bar\xi_i^k=\op{col}(\xi_i^{(1)},\dots,\xi_i^{(N_k)})$. At each time $k$, the approximation is 
\begin{equation}\label{eq_F_SA}
F^{\textup{SA}}_i(x_i^k,\boldsymbol x_{-i}^k,\bar\xi_i^k)=\textstyle{\frac{1}{N_k}\sum_{s=1}^{N_k}\nabla_{x_i} J_i(x_i^k,\boldsymbol x_{-i}^k,\xi_i^{(s)})}
\end{equation}
where $N_k$ is the batch size, i.e., the number of sample to be taken. 
We define the distance between the expected value and its approximation as 
$$\epsilon^k=F^{\textup{SA}}(\boldsymbol x^k,\bs\xi^k)-\FF(\boldsymbol x^k),$$
where $F^\textup{SA}(\bs x,\bs\xi)=\op{col}(F_i^{\textup{SA}}(\bs x,\bar\xi_i))$ and $\bs \xi=\op{col}(\bar \xi_1,\dots,\bar\xi_n)$.
From now on, the superscript SA indicates that we use $F^\textup{SA}$ as in \eqref{eq_F_SA}.
Let us introduce the filtration $\mc F=\{\mc F_k\}$, i.e., a family of $\sigma$-algebras such that $\mathcal{F}_{0} = \sigma\left(X_{0}\right)$, for all $k \geq 1$, 
$\mathcal{F}_{k} = \sigma\left(X_{0}, \xi_{1}, \xi_{2}, \ldots, \xi_{k}\right)$
and $\mc F_k\subseteq\mc F_{k+1}$ for all $k\geq0$. The filtration $\mc F$ collects the informations that each agent has at the beginning of each iteration $k$. We note that the process $\epsilon_k$ is adapted to $\mc F_k$ and it satisfies the following assumption \cite{staudigl2019,iusem2017}.
\begin{assumption}\label{ass_error}
For al $k\geq 0$, $\EEk{\epsilon_k}=0$ a.s..
\fineass\end{assumption}
Moreover, the stochastic error has a vanishing second moment that  depends on the increasing number of samples $N_k$ taken at each iteration.
\begin{assumption}\label{ass_batch}
There exist $c,k_0,a>0$ such that, for all $k\in\NN$, 
$N_k\geq c(k+k_0)^{a+1}.$
For all $k$ and $C>0$, the stochastic error is such that 
\begin{equation}\label{eq_var}
\textstyle{\EE[\|\epsilon^k\||\mc F_k]\leq \frac{C\sigma^2}{N_k}\text{ a.s..}}
\end{equation}
\end{assumption}
The bound for the stochastic error in \eqref{eq_var} can be obtained as a consequence of some milder assumptions and it is called variance reduction; we refer to \cite[Lem. 3.12]{iusem2017}, \cite[Lem. 6]{franci2020tac} for more details. Concerning the batch size, Assumption \ref{ass_batch} is standard in the SA literature \cite[Eq. 11]{iusem2017}, \cite[Eq. v-SPRG]{cui2019} since it implies that the sequence $(1/N_k)_{k\in\NN}$ is summable.
\vspace{-.1cm}
\begin{algorithm}[t]
\caption{Distributed stochastic projected reflected gradient (SPRG)}\label{algo_no}
Initialization: $x_i^0 \in \Omega_i, \lambda_i^0 \in \RR_{\geq0}^{m},$ and $z_i^0 \in \RR^{m} .$\\
Iteration $k$: Agent $i$\\
(1) Updates 
$$\begin{aligned}
&\tilde x_{i}^{k}=2x_i^k-x_i^{k-1}\\
&\tilde z_{i}^{k}=2z_i^k-z_i^{k-1}\\
&\tilde \lambda_{i}^{k}=2\lambda_i^k-\lambda_i^{k-1}\\
\end{aligned}$$
(2) Receives $x_{j}^k$, $j \in \mathcal{N}_{i}^{J}$ and $z_j^k, \lambda_{j}^k$, $j \in \mathcal{N}_{i}^{\lambda}$, then updates:
$$\begin{aligned}
x_i^{k+1}&=\op{proj}_{\Omega_{i}}[ x_i^k-\alpha_{i}(F^\textup{SA}_{i}(\tilde x_i^k, \tilde {\boldsymbol{x}}_{-i}^k,\xi_i^k)+A_{i}^\top \tilde \lambda_i^k)]\\
z_i^{k+1}&= z_i^k-\nu_{i} \textstyle{\sum_{j \in \mathcal{N}_{i}^{\lambda}} w_{i,j}(\tilde \lambda_i^k-\tilde \lambda_{j}^k)}\\
\lambda_i^{k+1}&=\op{proj}_{\RR^m_{\geq 0}}\{ \lambda_i^k+\tau_{i}(A_{i}\tilde x_i^k-b_{i})\\
&-\tau\textstyle{\sum_{j \in \mathcal{N}_{i}^{\lambda}} w_{i,j}[(\tilde z_{i}^{k}-\tilde z_j^k)-(\tilde \lambda_i^k-\tilde \lambda_j^k)]\}}
\end{aligned}$$\vspace{-.2cm}
\end{algorithm}
\section{Distributed stochastic projected reflected gradient algorithms}\label{sec_dist}
In this section, we propose two distributed instances of a stochastic projected reflected gradient (SPRG) algorithm for finding a v-SGNE of the game in \eqref{eq_game}. The iterations are presented in Algorithms \ref{algo_no} and \ref{algo_si} and are inspired by \cite{cui2019,malitsky2015}.
We note that in Algorithm \ref{algo_no} only one communication round is needed for the update of the local variables $x_i^k$, $z_i^k$ and $\lambda_i^k$ at each iteration $k$, while the preconditioned version requires an extra exchange of information before the update of the dual variable. However, in Algorithm \ref{algo_si} the reflection step is not done for the auxiliary variable. 
Since we want the algorithm to be distributed, we assume that each agent $i$ only knows its local data $\Omega_i$, $A_i$ and $b_i$. Moreover, each player is able to compute the approximation $F^\textup{SA}(\bs x,\bs \xi)$ in \eqref{eq_F_SA} of $\FF(\bs x)$ in \eqref{eq_grad}, given the collective decision $\bs x$. We assume that each agent has access to all the decision variables that affect its pseudogradient (full decision information setup). These information are collected in the sets $\mc N_i^J$, $i\in\mc I$, i.e., the set of agents $j$ whose decision $x_j$ explicitly influences $J_i$.
Since the v-SGNE requires consensus of the dual variables, we introduce an auxiliary variable $z_i\in\RR^m$, $i\in\mc I$. The role of $\bs z=\op{col}(z_1,\dots,z_N)$ is to help reaching consensus and, along with a local copy of the dual variable $\lambda_i$, it is shared through the graph $\mc G^\lambda=(\mc I,\mc E^\lambda)$. The set of edges $\mc E^\lambda$ represents the exchange of the private information on the dual variables: $(i,j)\in\mc E^\lambda$ if agent $i$ can receive $\{\lambda_j,z_j\}$ from agent $j$. The set of neighbors of $i$ in $\mc G^\lambda$ is indicated with $\mathcal{N}_{i}^{\lambda}=\{j |(j, i) \in \mathcal{E}_{\lambda}\}$ \cite{yi2019,yu2017}. 
Since each agent feasible set implicitly depends on all the other agents decisions, to reach consensus of the dual variables, all agents must coordinate and therefore, $\mc G^\lambda$ must be connected.
\begin{assumption}\label{ass_graph}
The dual-variable communication graph $\mc G^\lambda$ is undirected and connected.\fineass
\end{assumption}

The weighted adjacency matrix of the dual variables graph is indicated with $W\in\RR^{N\times N}$. 
Let $L=D-W\in\RR^{N\times N}$ be the Laplacian matrix associated to $W$, where $D=\op{diag}(d_1,\dots,d_N)$ is the diagonal matrix of the agents degrees $d_i=\sum_{j=1}^{N} w_{i,j}$. It follows from Assumption \ref{ass_graph} that the adjacency matrix $W$ and the Laplacian $L$ are both symmetric, i.e., $W=W^\top$ and $L = L^\top$. 
\begin{algorithm}[t]
\caption{Distributed stochastic preconditioned projected reflected gradient (SpPRG)}\label{algo_si}
Initialization: $x_i^0 \in \Omega_i, \lambda_i^0 \in \RR_{\geq0}^{m},$ and $z_i^0 \in \RR^{m} .$\\
Iteration $k$: Agent $i$\\
(1) Updates
$$\begin{aligned}
&\tilde x_{i}^{k}=2x_i^k-x_i^{k-1}\\
&\tilde \lambda_{i}^{k}=2\lambda_i^k-\lambda_i^{k-1}\\
\end{aligned}$$
(2) Receives $\tilde x_{j}^{k}$, $j \in \mathcal{N}_{i}^{J}$ and $\lambda_{j}^k$, $j \in \mathcal{N}_{i}^{\lambda}$ then updates:
$$\begin{aligned}
&x_i^{k+1}=\op{proj}_{\Omega_{i}}[x_i^k-\alpha_{i}(F^\textup{SA}_{i}(\tilde x_i^k, \tilde {\boldsymbol{x}}_{-i}^k,\xi_i^k)-A_{i}^\top \lambda_i^k)]\\
&z_i^{k+1}=z_i^k+v_{i} \textstyle{\sum\nolimits_{j \in \mathcal{N}_{i}^{\lambda}} w_{i,j}(\lambda_i^k-\lambda_{j}^k)}\\
\end{aligned}$$
(3) Receives $\tilde x_{j}^{k+1}$, $j \in \mathcal{N}_{i}^{J}$, $z_{j}^{k+1}$, $\tilde \lambda_{j}^k$, $j \in \mathcal{N}_{i}^{\lambda}$ then updates:
$$\begin{aligned}
&\lambda_i^{k+1}=\op{proj}_{\RR_{+}^{m}}\left[\lambda_i^k+\sigma_{i}\left(A_{i}(2 x_i^{k+1}-x_i^k)-b_{i}\right)\right.\\
&\qquad+\sigma_{i}\textstyle{\sum\nolimits_{j \in \mathcal{N}_{i}^{\lambda}} w_{i,j}\left(2(z_i^{k+1}-z_{j}^{k+1})-(z_i^k-z_{j}^k)\right)}\\
&\qquad-\sigma_{i}\textstyle{\sum\nolimits_{j \in \mathcal{N}_{i}^{\lambda}} \left.w_{i,j}(\tilde\lambda_i^k-
\tilde\lambda_{j}^k)\right]}\\
\end{aligned}$$\vspace{-.2cm}
\end{algorithm}

To obtain the distributed iterations presented in Algorithm \ref{algo_no} and \ref{algo_si}, we exploit a splitting technique starting from the operator $\mc T $ in \eqref{eq_T}. First, let us note that it can be written as $\mc T=\mc Q+\mc P+\mc R$ where
\begin{equation}
\begin{aligned}
\mathcal{Q}&:\left[\begin{smallmatrix}
\boldsymbol{x} \\
\boldsymbol{\lambda}
\end{smallmatrix}\right] \mapsto\left[\begin{smallmatrix}
\mathbb{F}(\boldsymbol{x}) \\
b
\end{smallmatrix}\right], \\
\mc R&:\left[\begin{smallmatrix}
\boldsymbol{x} \\
\boldsymbol{\lambda}
\end{smallmatrix}\right] \mapsto\left[\begin{smallmatrix}
0 & A^{\top} \\
-A & 0
\end{smallmatrix}\right]\left[\begin{smallmatrix}
\boldsymbol{x} \\
\boldsymbol{\lambda}
\end{smallmatrix}\right], \\
\mathcal{P}&:\left[\begin{smallmatrix}
\boldsymbol{x} \\
\boldsymbol{\lambda}
\end{smallmatrix}\right] \mapsto\left[\begin{smallmatrix}
\mathrm{N}_{\Omega}(\boldsymbol{x}) \\
\mathrm{N}_{\mathbb{R}_{\geq 0}^{m}}(\lambda)
\end{smallmatrix}\right],
\end{aligned}
\end{equation}
and $\boldsymbol\lambda=\op{col}(\lambda_1,\dots,\lambda_N)\in\RR^{Nm}$. Moreover, let ${\bf{L}}=L\otimes \op{I}_m\in\RR^{Nm\times Nm}$, ${\bf{A}}=\op{diag}\{A_1,\dots,A_N\}\in\RR^{Nm\times n}$ and let us define the column vector $\bs b$ of suitable dimensions.

Following \cite{yi2019}, to force consensus on the dual variables, we impose the Laplacian constraint ${\bf{L}}\bs\lambda=0$. Then, to preserve monotonicity, we augment the operators introducing the auxiliary variable $\bs z$. 
Exploiting the splitting $\mc T=(\mc Q+\mc R)+\mc P$, we consider the following extended operators:
\begin{equation}\label{eq_AB}
\begin{aligned}
\mc A&:\left[\begin{smallmatrix}
\bs{x} \\
\bs z\\
\bs \lambda
\end{smallmatrix}\right] \mapsto\left[\begin{smallmatrix}
\FF(\bs{x}) \\ 
0\\
{\bf{L}}\bs\lambda+\bs b
\end{smallmatrix}\right]+
\left[\begin{smallmatrix}
0 & 0 & {\bf{A}}^{\top} \\ 
0 & 0 & {\bf{L}}\\
-{\bf{A}} & -{\bf{L}} & 0
\end{smallmatrix}\right]\left[\begin{smallmatrix}
\bs{x} \\ 
\bs z\\
\bs \lambda
\end{smallmatrix}\right]\\
\mc B&:\left[\begin{smallmatrix}
\bs{x} \\ 
\bs z\\
\bs \lambda
\end{smallmatrix}\right]\mapsto\left[\begin{smallmatrix}
\op{N}_\Omega(\bs{x}) \\ 
{\bf{0}}\\
\mathrm{N}_{\RR_{ \geq 0}^{m}}(\lambda)
\end{smallmatrix}\right].
\end{aligned}
\end{equation}

Since the distribution of the random variable is unknown, we replace $\FF(\bs x)$ in \eqref{eq_grad} with $F^\textup{SA}$ in \eqref{eq_F_SA} and $\mc A$ with $\hat{\mc A}=\mc A^\textup{SA}$.
Then, given $\boldsymbol\omega=\op{col}(\boldsymbol x,\boldsymbol z,\boldsymbol \lambda)$, Algorithm \ref{algo_no} in compact form reads as the SPRG iteration
\begin{equation}\label{eq_algo2}
\bs\omega^{k+1}=(\op{Id}+\Phi^{-1}\bar{\mc B})^{-1}(\bs\omega^k-\Phi^{-1}\hat{\mc A}(2\bs\omega^k-\bs\omega^{k-1})),
\end{equation}
where $\Phi\succ0$ contains the inverse of step size sequences, i.e., 
$\Phi=\op{diag}(\alpha^{-1},\nu^{-1},\sigma^{-1}),$
and $\alpha^{-1}$, $\nu^{-1}$, $\sigma^{-1}$ are diagonal matrices.

Another possible splitting of the operator $\mc T$ in \eqref{eq_T} can be considered, namely, $\mc T=\mc Q+(\mc P+\mc R)$. Therefore, we can write a different couple of extended operators as
\begin{equation}\label{eq_CD}
\begin{aligned}
\mc C&:\left[\begin{smallmatrix}
\boldsymbol x \\
\boldsymbol z\\
\boldsymbol \lambda
\end{smallmatrix}\right] \hspace{-.1cm}\mapsto\left[\begin{smallmatrix}
\FF(\boldsymbol x) \\ 
0\\
\bs b
\end{smallmatrix}\right]+\left[\begin{smallmatrix}
0\\
0\\
{\bf{L}}\boldsymbol\lambda
\end{smallmatrix}\right]\\
\mc D &:\left[\begin{smallmatrix}
\boldsymbol x \\ 
\boldsymbol z\\
\boldsymbol \lambda
\end{smallmatrix}\right]\mapsto\left[\begin{smallmatrix}
\op{N}_{\Omega}(\boldsymbol x) \\ 
{\bf{0}}_{mN}\\
\op{N}_{\RR_{ \geq 0}^{m}}(\boldsymbol \lambda)
\end{smallmatrix}\right]+\left[\begin{smallmatrix}
0 & 0 & {\bf{A}}^{\top} \\ 
0 & 0 & {\bf{L}}\\
-{\bf{A}} & -{\bf{L}} & 0
\end{smallmatrix}\right]\left[\begin{smallmatrix}
\boldsymbol x \\ 
\boldsymbol z\\
\boldsymbol \lambda
\end{smallmatrix}\right].
\end{aligned}
\end{equation}

Analogously to $\hat{\mc A}$, we replace $\FF$ in (\ref{eq_grad}) with the approximation $F^\textup{SA}$ in \eqref{eq_F_SA} and $\mc C$ with $\hat{\mc C}=\mc C^\textup{SA}$.
Then, we can write Algorithm \ref{algo_si} in compact form as 
\begin{equation}\label{eq_algo}
\bs\omega^{k+1}=(\op{Id}+\Psi^{-1}\mc D)^{-1}(\bs\omega^k-\Psi^{-1}\hat{\mc C}(2\bs\omega^k-\bs\omega^{k-1}))
\end{equation}
where $\Psi$ is the preconditioning matrix. Specifically, let $\alpha^{-1}=\op{diag}\{\alpha_1^{-1}\op{I}_{n_1},\dots,\alpha_N^{-1}\op{I}_{n_N}\}\in\RR^{n\times n}$ and similarly $\sigma^{-1}$ and $\nu^{-1}$ of suitable dimensions. Then, we have
\begin{equation}\label{phi}
\Psi=\left[\begin{smallmatrix}
\alpha^{-1} & 0 & -{\bf{A}}^\top\\
0 & \nu^{-1} & -{\bf{L}}\\
-{\bf{A}} & -{\bf{L}} & \sigma^{-1}
\end{smallmatrix}\right].
\end{equation}
By expanding \eqref{eq_algo} with $\hat{\mc C}$ and $\mc D$ as in \eqref{eq_CD} and $\Psi$ as in \eqref{phi}, we obtain the iterations in Algorithm \ref{algo_si}. 

We note that in general the extended operators in \eqref{eq_AB} and \eqref{eq_CD} have different monotonicity properties. Which one specifically is discussed for the convergence analysis (Section \ref{sec_conv_uni} and \ref{sec_conv_coco}, respectively) that follows from the considerations in the next section.

\vspace{-.1cm}
\section{Technical discussion on weak sharpness and cocoercivity}\label{sec_coco}
The original proof of the SPRG presented in \cite{cui2019} for SVI shows convergence under the assumption of monotonicity and weak sharpness. The weak sharpness property was first introduced to characterize the minima of
\begin{equation}\label{eq_opt}
\min_{x\in\mc X}f(x)
\end{equation}
with $f:\mc X\to\bar\RR$ \cite{burke1993}. It was presented as en extension of the concept of strong (or sharp) solution, i.e., for all $x^*\in\mc X^*=\op{SOL}(f,\mc X)$
$f(x)\geq f(x^*)+\rho\norm{x-x^*},$
which holds if there is only one minimum.
For generalizing non-unique solutions, the following definition was proposed in \cite{burke1993}: a set $\mc X^*$ is a set of weak sharp minima for the function $f$ if, for all $x\in\mc X$ and $x^*\in\mc X^*$,
\begin{equation}\label{eq_weak_f}
f(x)\geq f(x^*)+\rho \op{dist}(x,\mc X^*)
\end{equation}
where $\op{dist}(x,\mc X^*)=\inf_{x^*\in\mc X^*}\norm{x-x^*}$. We note that a strong solution is also a weak sharp minimum while the contrary holds only if the solution is unique \cite{burke1993}.

The concept was later extended to variational inequalities in \cite{marcotte1998}, using the formal definition 
\begin{equation}\label{eq_tangent}
\textstyle{-\FF(\bs x^{*}) \in \operatorname{int}\left(\bigcap\nolimits_{\bs x \in \bs{\mc{X}}^{*}}[\op{T}_{\bs{\mc X}}(\bs x) \cap \op{N}_{\bs{\mc{X}}^{*}}(\bs x)]^{\circ}\right),}
\end{equation}
which was already proved to be equivalent to \eqref{eq_weak_f} for the problem in $\eqref{eq_opt}$ when $\FF(x^*)=\nabla f(x^*)$.
Unfortunately, the characterization in \eqref{eq_tangent} is hard to use in a convergence proof. Hence, more practical conditions have been proposed. The first one \cite{marcotte1998} relies on the gap function $G$ and it reads as
\begin{equation}\label{eq_weak_gap}
G(x)=\max_{y\in\mc X}\langle\FF(y),x-y\rangle\geq \rho \op{dist}(x,\mc X^*).
\end{equation}
Another condition, used in the convergence proof of the SPRG \cite{cui2019}, was proposed in \cite{yousefian2014}: 
\begin{equation}\label{eq_weak}
\left\langle \FF(\bs x^{*}), \bs x-\bs x^{*}\right\rangle \geq \rho\; \mathrm{dist}(\bs x, \bs{\mc X}^{*}),
\end{equation}
for all $\bs x^{*} \in \bs {\mc X}^{*}$ and $\bs x \in \bs{\mc X}$.
For the weak sharpness definition in \eqref{eq_tangent} to be equivalent to \eqref{eq_weak_gap} and \eqref{eq_weak}, the (pseudogradient) mapping should have the \textit{F-unique} property, i.e., $\FF(\op{SOL}(\FF,\bs{\mc X}))$ should be at most a singleton \cite[Section 2.3.1]{facchinei2007}. The class of operators that certainly have this property is that of \textit{monotone$^+$} operators, namely, a monotone mapping $F$ such that for all $\bs x,\bs y\in\bs{\mc X}$
$
\langle F(\bs y)-F(\bs x), \bs y-\bs x\rangle= 0 \Rightarrow F(\bs y)=F(\bs x).
$
If a mapping is monotone$^+$, then \eqref{eq_tangent} is equivalent to \eqref{eq_weak_gap} and \eqref{eq_weak} \cite[Thm. 4.1]{marcotte1998}. 
The monotone$^+$ property does not necessarily hold for the extended operator $\mc C$ in \eqref{eq_CD}, even if it holds for $\FF$. However, it holds if the operator is cocoercive \cite[Def. 2.3.9]{facchinei2007}.
We conclude this section with some examples showing that the condition in \eqref{eq_weak} may hold also if the mapping is not monotone$^+$ and that the domains are relevant for the validity of the assumption. For more details on monotone$^+$ operators and the weak sharpness property, we refer to \cite{iusem1998,crouzeix2000,marcotte1998,hu2011}. 
\begin{example}\cite{marcotte1998}
Consider the variational inequality in \eqref{eq_vi} where $F(\bs x) = \op{col}(-x_2,2x_1)$ and $\bs{\mc X} = [0,1]^2$. The mapping $F$ is pseudomonotone but not monotone$^+$ on $\bs{\mc X}$. The solution set is $\bs{\mc X}^*= \{\bs x\in \bs{\mc X} : x_2 = 0\}$ and it holds that
\begin{equation}\label{eq_example}
\begin{aligned}
G(\bs x) &=\max _{\bs y \in \bs{\mc X}}\langle F(\bs y),\bs x-\bs y\rangle=\max _{\bs y \in \bs{\mc X}}-x_{1} y_{2}-y_{1} y_{2}+2 x_{2} y_{1} \\
&=2 x_{2}=2 \operatorname{dist}(\bs x, \bs{\mc X}^{*}).
\end{aligned}
\end{equation}
Therefore, $\bs{\mc X}^{*}$ satisfies \eqref{eq_weak_gap} with $\rho=2$ but, for any $\bs x^{*}\in\bs{\mc X}^{*}$, $[T_{\bs{\mc X}}(\bs x^{*}) \cap N_{\bs{\mc X}^*}(\bs x^{*})]^{\circ}=\{x_{2}^{*} \leq 0\}$ and $-F(\bs x^{*})\notin\bigcap_{\bs x^{*} \in \bs{\mc X}}[T_{\bs{\mc X}}(\bs x^{*}) \cap N_{\bs{\mc X}^{*}}(\bs x^{*})]^{\circ}$. Thus, the solution set $\bs{\mc X}^{*}$ is not weakly sharp.\fineass
\end{example}

\begin{example}\label{ex_academic}
Consider the mapping $F(\bs x)=\op{col}(-x_2,x_1)$ and the associate variational inequality in \eqref{eq_vi} with $\bs{\mc X} = [0,1]^2$. Then the mapping $F$ is monotone but not monotone$^+$ on $\bs{\mc X}$. The solution set is $\bs{\mc X}^*= \{\bs x\in \bs{\mc X} : x_2 = 0\}$ and, similarly to \eqref{eq_example}, the conditions \eqref{eq_weak_gap} and \eqref{eq_weak} hold.

Now, let $\bs{\mc X}=\RR^2$. In this case, there is only one solution and $\bs{\mc X}^*=\{\bs 0_2\}$. However, \eqref{eq_weak} reads as
$\langle F(\bs 0),\bs x\rangle=0\geq\rho \;\op{dist}(\bs x,\mc X^*)=\norm{x},$
which is false.\fineass
\end{example}
\vspace{-.1cm}
\section{Convergence under uniqueness of solution}\label{sec_conv_uni}
In light of the considerations in Section \ref{sec_coco}, we know that a unique solution is also a weak solution and that \eqref{eq_weak} may hold even if the mapping is not monotone$^+$. Therefore, here we consider the case of merely monotone operators but with unique solution and prove that the proposed (non-preconditioned) Algorithm \ref{algo_no} converges to a v-SGNE.

First, the following lemma ensure that the zeros of $\mc A+\mc B$ are v-SGNEs.

\begin{lemma}\label{lemma_zeri}
Let Assumptions \ref{ass_J}-\ref{ass_graph} hold. Consider the operators $\mc T$ in (\ref{eq_T}) and $\mc A$ and $\mc B$ in (\ref{eq_AB}). Then:
\begin{itemize}
\item[(i)] If $\boldsymbol\omega^* \in \op{zer}(\mc A+\mc B)$, then $\boldsymbol{x}^{*}$ is a v-SGNE of game in \eqref{eq_game}, i.e., $\boldsymbol x^*$ solves the $\op{SVI} (\bs{\mc X}, \FF)$ in (\ref{eq_vi}). Moreover $\boldsymbol \lambda^{*}=\mathbf{1}_{N} \otimes \lambda^{*},$ and $(\boldsymbol x^*,\bs\lambda^{*})$ satisfy the KKT condition in (\ref{game_kkt}) i.e., $\op{col}(\boldsymbol x^*,\bs \lambda^{*}) \in \op{zer}(\mc T)$
\item[(ii)] $\op{zer}(\mc T) \neq \emptyset$ and $\op{zer}(\mc A+\mc B) \neq \emptyset$
\end{itemize}
\end{lemma}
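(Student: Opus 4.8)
The plan is to unwind the block structure of $\mc A+\mc B$ in \eqref{eq_AB}, read off the three inclusions componentwise, and then use the Laplacian structure together with \cite[Th.~3.1]{facchinei2007vi} to identify the first and third components with the KKT system \eqref{eq_kkt_vi}. For part (i), suppose $\bs\omega^*=\op{col}(\bs x^*,\bs z^*,\bs\lambda^*)\in\op{zer}(\mc A+\mc B)$. Writing out $0\in(\mc A+\mc B)(\bs\omega^*)$ block by block gives
\begin{equation}\label{eq_plan_blocks}
\begin{aligned}
0&\in\FF(\bs x^*)+\op N_\Omega(\bs x^*)+{\bf A}^\top\bs\lambda^*,\\
0&={\bf L}\bs\lambda^*,\\
0&\in\op N_{\RR^m_{\geq0}}(\bs\lambda^*)+{\bf L}\bs\lambda^*+{\bf L}\bs z^*-{\bf A}\bs x^*+\bs b.
\end{aligned}
\end{equation}
The key step is the second line: since $\mc G^\lambda$ is connected by Assumption \ref{ass_graph}, $\ker({\bf L})=\{\uno_N\otimes\lambda:\lambda\in\RR^m\}$, so ${\bf L}\bs\lambda^*=0$ forces $\bs\lambda^*=\uno_N\otimes\lambda^*$ for some $\lambda^*\in\RR^m_{\geq0}$. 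Substituting this into the first line and using ${\bf A}^\top(\uno_N\otimes\lambda^*)=A^\top\lambda^*$ (by the definition of ${\bf A}=\op{diag}\{A_1,\dots,A_N\}$ and $A=[A_1,\dots,A_N]$) recovers exactly the first inclusion of \eqref{eq_kkt_vi}. For the third line, left-multiplying by $\uno_N^\top\otimes\op I_m$ annihilates both ${\bf L}$ terms (since $\uno_N^\top L=0$) and collapses $-{\bf A}\bs x^*+\bs b$ to $-(A\bs x^*-b)$; one then argues, using $\sum_i\op N_{\RR^m_{\geq0}}(\lambda^*)\subseteq\op N_{\RR^m_{\geq0}}(\lambda^*)$ for the diagonal and the consensus of $\bs\lambda^*$, that this yields $0\in\op N_{\RR^m_{\geq0}}(\lambda^*)-(A\bs x^*-b)$, i.e., the second inclusion of \eqref{eq_kkt_vi}. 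Hence $\op{col}(\bs x^*,\bs\lambda^*)\in\op{zer}(\mc T)$, and by the characterization recalled after \eqref{eq_kkt_vi} (that is, \cite[Th.~3.1]{facchinei2007vi}) $\bs x^*$ solves $\op{SVI}(\bs{\mc X},\FF)$, so it is a v-SGNE.

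For part (ii), I would proceed in the reverse direction. By the Remark following \eqref{eq_vi}, $\op{SOL}(\bs{\mc X},\FF)\neq\varnothing$; pick $\bs x^*$ in it. Since Assumption \ref{ass_X} gives Slater's qualification, the KKT conditions \eqref{eq_kkt_vi} admit a multiplier $\lambda^*\in\RR^m_{\geq0}$, so $\op{col}(\bs x^*,\lambda^*)\in\op{zer}(\mc T)\neq\varnothing$. Then set $\bs\lambda^*=\uno_N\otimes\lambda^*$, which satisfies ${\bf L}\bs\lambda^*=0$ and the first and (aggregated) third inclusions of \eqref{eq_plan_blocks}; the remaining freedom is to choose $\bs z^*$ so that the \emph{full} (non-aggregated) third inclusion holds. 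This is where the auxiliary variable $\bs z$ earns its keep: one needs $\sum_{j}w_{i,j}(z_i^*-z_j^*)$ to absorb, for each $i$, the discrepancy $A_i x_i^*-b_i-r_i$ between the local constraint slack and its "fair share" $r_i$ of the normal-cone element, consistently across agents. Concretely, the requirement is ${\bf L}\bs z^*\in\op N_{\RR^m_{\geq0}}(\uno_N\otimes\lambda^*)-{\bf A}\bs x^*+\bs b$ for some valid selection; since the right-hand side, after the construction above, lies in $\op{range}({\bf L})$ (it is orthogonal to $\ker({\bf L})=\uno_N\otimes\RR^m$ precisely because $\uno_N^\top\otimes\op I_m$ applied to it gives $0$ by the KKT relation for $\lambda^*$), such a $\bs z^*$ exists by solving a linear system on $\op{range}({\bf L})$.

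The main obstacle I anticipate is the bookkeeping in the third block: the normal cone $\op N_{\RR^{m}_{\geq0}}(\bs\lambda^*)$ in $\mc B$ is a product of $N$ copies (one per agent's local $\lambda_i^*$), and one must pass carefully between the "stacked" element $\op{col}(d_1,\dots,d_N)$ with $d_i\in\op N_{\RR^m_{\geq0}}(\lambda^*)$ and the single element $\sum_i d_i\in\op N_{\RR^m_{\geq0}}(\lambda^*)$ — this uses that the normal cone to $\RR^m_{\geq0}$ at a common point is a convex cone closed under addition, so the sum stays in it, but the reverse splitting (needed in part (ii)) requires an explicit construction such as $d_i=\tfrac1N d$. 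Everything else — the kernel description of ${\bf L}$, the identities ${\bf A}^\top(\uno_N\otimes\lambda)=A^\top\lambda$ and $(\uno_N^\top\otimes\op I_m){\bf A}=A$ — is routine linear algebra. I would keep the proof short by citing \cite{yi2019} for the general splitting construction and \cite[Th.~3.1]{facchinei2007vi} for the KKT/VI equivalence, and merely verify the two directions on the explicit operators.
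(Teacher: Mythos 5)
Your argument is correct, and it is in substance the argument that the paper delegates entirely to a citation: the paper's proof of this lemma is the single line ``It follows from \cite[Th.~2]{yi2019}'', whereas you carry out the verification explicitly on the operators \eqref{eq_T} and \eqref{eq_AB}. The two routes are mathematically the same — your blockwise reading of $0\in(\mc A+\mc B)(\bs\omega^*)$, the identification $\ker(\mathbf L)=\{\uno_N\otimes\lambda\}$ from connectedness (Assumption \ref{ass_graph}, which also gives $L=L^\top$ so that $\op{range}(\mathbf L)=\ker(\mathbf L)^\perp$, the fact you need for the existence of $\bs z^*$), the left-multiplication by $\uno_N^\top\otimes\op I_m$, the cone-addition step $\sum_i d_i\in\op N_{\RR^m_{\geq0}}(\lambda^*)$ with the reverse splitting $d_i=\tfrac1N d$, and the appeal to \cite[Th.~3.1]{facchinei2007vi} plus Slater's qualification for part (ii) are exactly the ingredients of the proof of \cite[Th.~2]{yi2019}, transplanted to the expected-value pseudogradient $\FF$ in \eqref{eq_grad} (legitimate here, since under Assumptions \ref{ass_J}--\ref{ass_X} the remark after \eqref{eq_vi} gives $\op{SOL}(\bs{\mc X},\FF)\neq\varnothing$, which is all the existence part needs). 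What your version buys is self-containedness and the explicit role of $\bs z^*$ as the variable absorbing the non-aggregated part of the third block; what the paper's version buys is brevity. One small slip: in \eqref{eq_AB} the third row of the skew matrix is $[-\mathbf A,\,-\mathbf L,\,0]$, so the third block inclusion should carry $-\mathbf L\bs z^*$ rather than $+\mathbf L\bs z^*$; this is immaterial to your construction (replace $\bs z^*$ by $-\bs z^*$, or note that $\op{range}(\mathbf L)$ is a subspace), but fix the sign for consistency with the paper's operator.
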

\begin{proof}
It follows from \cite[Th. 2]{yi2019}.
\end{proof}

To ensure that $\mc A$ and $\mc B$ have the properties for the convergence result, we make the following assumption.
\begin{assumption}\label{ass_mono}
$\FF$ as in \eqref{eq_grad} is monotone and $\ell_\FF$-Lipschitz continuous for some $\ell_\FF>0$.
\fineass\end{assumption}
Then, the following holds.

\begin{lemma}\label{lemma_op}
Let Assumptions \ref{ass_J} and \ref{ass_mono} hold and let $\Phi\succ 0$. Then, $\mc A$ and $\mc B$ in \eqref{eq_AB} have the following properties.
\begin{enumerate}
\item[(i)] $\mc A$ is monotone and $\ell_{\mc A}$-Lipschitz continuous.
\item[(ii)] The operator $\mc B$ is maximally monotone.
\item[(iii)] $\Phi^{-1}\mc A$ is monotone and $\ell_{\Phi}$-Lipschitz continuous.
\item[(iv)] $\Phi^{-1}\mc B$ is maximally monotone.
\end{enumerate}
\end{lemma}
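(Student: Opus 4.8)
\textbf{Proof plan for Lemma~\ref{lemma_op}.}
The strategy is to establish (i)--(ii) directly from the block structure of $\mc A$ and $\mc B$, and then obtain (iii)--(iv) by a change of metric argument using the $\Phi$-induced inner product. For part (i), I would write $\mc A = \mc A_1 + \mc A_2$, where $\mc A_1 : \op{col}(\bs x,\bs z,\bs\lambda) \mapsto \op{col}(\FF(\bs x),0,{\bf L}\bs\lambda + \bs b)$ collects the ``diagonal'' terms and $\mc A_2$ is the skew-symmetric coupling matrix. Monotonicity of $\mc A_1$ follows because $\FF$ is monotone by Assumption~\ref{ass_mono}, the middle block is zero, and $\bs\lambda \mapsto {\bf L}\bs\lambda$ is monotone since ${\bf L} = L \otimes \op{I}_m \succeq 0$ (the Laplacian of a connected undirected graph is symmetric positive semidefinite by Assumption~\ref{ass_graph}); the constant $\bs b$ does not affect monotonicity. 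The matrix $\mc A_2$ is skew-symmetric, hence monotone (indeed $\langle \mc A_2 w, w\rangle = 0$ for all $w$), so $\mc A = \mc A_1 + \mc A_2$ is monotone as a sum of monotone operators. Lipschitz continuity of $\mc A$ follows from $\ell_\FF$-Lipschitz continuity of $\FF$ together with the fact that the remaining terms are linear (hence Lipschitz with constant bounded by the spectral norms of ${\bf L}$, ${\bf A}$); I would simply set $\ell_{\mc A}$ to be a suitable combination of $\ell_\FF$, $\|{\bf A}\|$ and $\|{\bf L}\|$.

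For part (ii), $\mc B$ is the direct sum of $\op{N}_\Omega$, the zero operator on the $\bs z$-component, and $\op{N}_{\RR^m_{\geq 0}}$ (applied blockwise in $\bs\lambda$). Each normal-cone operator of a nonempty closed convex set is maximally monotone \cite[Ex.~20.26]{bau2011}, $\Omega = \bs\Omega$ is nonempty closed convex by Assumption~\ref{ass_X}, the zero operator is trivially maximally monotone, and a direct sum (equivalently, a separable operator acting on a product space) of maximally monotone operators is maximally monotone \cite[Prop.~20.23]{bau2011}. Hence $\mc B$ is maximally monotone.

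For (iii) and (iv), the key observation is that $\Phi \succ 0$ so the $\Phi$-induced inner product $\langle \cdot,\cdot\rangle_\Phi = \langle \Phi\,\cdot,\cdot\rangle$ is a genuine inner product on the product space, and monotonicity of $\Phi^{-1}\mc A$ (resp.\ maximal monotonicity of $\Phi^{-1}\mc B$) in the standard metric is equivalent to monotonicity (resp.\ maximal monotonicity) of $\mc A$ (resp.\ $\mc B$) in the $\Phi$-metric, because $\langle \Phi^{-1}\mc A(u) - \Phi^{-1}\mc A(v), u-v\rangle_\Phi = \langle \mc A(u)-\mc A(v), u-v\rangle \ge 0$. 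This is exactly the standard preconditioning argument; for the maximal monotonicity of $\Phi^{-1}\mc B$ one invokes \cite[Lem.~3.7 / Prop.~23.2]{bau2011} on the preservation of maximal monotonicity under a change to an equivalent metric. Lipschitz continuity of $\Phi^{-1}\mc A$ in the $\Phi$-norm then follows from Lipschitz continuity of $\mc A$ and the norm-equivalence constants $\lambda_{\min}(\Phi)$, $\lambda_{\max}(\Phi)$, yielding $\ell_\Phi$ of order $\ell_{\mc A}\,\lambda_{\max}(\Phi^{-1})^{1/2}/\lambda_{\min}(\Phi^{-1})^{1/2}$ or similar.

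The only mildly delicate point, and the one I would be most careful about, is part (iv): passing from maximal monotonicity of $\mc B$ to that of $\Phi^{-1}\mc B$ is not purely formal, since $\Phi^{-1}\mc B$ need not be the subdifferential of a convex function, but it is handled precisely by the observation that $\Phi^{-1}\mc B$ is maximally monotone \emph{with respect to} $\langle\cdot,\cdot\rangle_\Phi$, which is an equivalent inner product, so maximal monotonicity transfers back to the standard inner product (the graph is unchanged; only the reference metric used to test maximality changes, and the two notions coincide because $\op{ran}(\op{Id}+\Phi^{-1}\mc B) = \RR^{n+2Nm}$ by Minty's theorem applied in the $\Phi$-metric). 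Everything else is a routine verification using the positive semidefiniteness of ${\bf L}$ and the skew-symmetry of the coupling block.
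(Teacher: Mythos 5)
Your proposal is correct in substance, but it takes a different route from the paper only in the sense that the paper does not spell the argument out at all: its proof of this lemma is a one-line deferral to \cite{yi2019} and \cite{franci2019}, where precisely the computations you describe are carried out. Your decomposition $\mc A=\mc A_1+\mc A_2$ (monotone ``diagonal'' part built from the monotonicity of $\FF$, ${\bf L}=L\otimes \op{I}_m\succeq 0$, and the irrelevance of the constant $\bs b$, plus a skew-symmetric coupling block with $\langle \mc A_2 w,w\rangle=0$), the maximal monotonicity of $\mc B$ as a direct sum of normal cones and the zero operator, and the change-of-metric argument $\langle \Phi^{-1}\mc A(u)-\Phi^{-1}\mc A(v),u-v\rangle_\Phi=\langle \mc A(u)-\mc A(v),u-v\rangle$ are exactly the ingredients of the cited proofs, so what your write-up buys is self-containedness (and it makes explicit that the argument silently uses Assumption~\ref{ass_X} for $\bs\Omega$ closed convex and Assumption~\ref{ass_graph} for $L=L^\top\succeq0$, which the lemma statement does not list but the setup provides).

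One point to fix in item (iv): the claim that maximal monotonicity of $\Phi^{-1}\mc B$ ``transfers back to the standard inner product'' is not the right statement, since $\Phi^{-1}\mc B$ need not even be monotone with respect to the standard inner product. What the lemma (and the convergence analysis) actually needs, and what your Minty argument correctly delivers, is maximal monotonicity of $\Phi^{-1}\mc B$ in the Hilbert space equipped with $\langle\cdot,\cdot\rangle_\Phi$: since $\Phi\succ0$ is continuous and strongly monotone and $\mc B$ is maximally monotone, $\Phi+\mc B$ is surjective, hence $\op{ran}(\op{Id}+\Phi^{-1}\mc B)=\Phi^{-1}\op{ran}(\Phi+\mc B)$ is the whole space and Minty's theorem in the $\Phi$-metric applies. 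Interpreting (iii)--(iv) with respect to the $\Phi$-induced inner product (as in \cite{yi2019}) and dropping the side remark about the standard metric makes your proof complete.
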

\begin{proof}
It follows similarly to \cite{yi2019,franci2020tac}.
\end{proof}

To guarantee that the weak sharpness property holds, we assume to have a strong solution.

\begin{assumption}\label{ass_sol}
The SVI in \eqref{eq_vi} has a unique solution.
\fineass\end{assumption}

We can now state the convergence result.
\begin{theorem}\label{theo_conv}
Let Assumptions \ref{ass_J}-\ref{ass_sol} hold. Then, the sequence $(\bs x_k)_{k\in\NN}$ generated by Algorithm \ref{algo_no} with $F^{\textup{SA}}$ as in \eqref{eq_F_SA} converges a.s. to a v-SGNE of the game in \eqref{eq_game}.
\end{theorem}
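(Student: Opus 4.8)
The plan is to recast the fixed-point recursion \eqref{eq_algo2} of Algorithm~\ref{algo_no} in operator form and run a stochastic quasi-Fej\'er argument in the spirit of Malitsky's projected--reflected--gradient method \cite{malitsky2015} and its stochastic version \cite{cui2019}. By Lemma~\ref{lemma_op}, in the space endowed with $\langle\cdot,\cdot\rangle_\Phi$ the operator $\Phi^{-1}\mc A$ is monotone and $\ell_\Phi$-Lipschitz and $\Phi^{-1}\mc B$ is maximally monotone; by Lemma~\ref{lemma_zeri}, $\op{zer}(\mc A+\mc B)\neq\varnothing$ and every $\bs\omega^*=\op{col}(\bs x^*,\bs z^*,\bs\lambda^*)$ in it has $\bs x^*$ equal to the (unique, by assumption) solution of $\op{SVI}(\bs{\mc X},\FF)$. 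Fix such an $\bs\omega^*$, set $\tilde{\bs\omega}^k:=2\bs\omega^k-\bs\omega^{k-1}$ and $\varepsilon_k:=\op{col}(\epsilon_k,0,0)$, so that \eqref{eq_algo2} is the inclusion $\Phi(\bs\omega^k-\bs\omega^{k+1})-\mc A(\tilde{\bs\omega}^k)-\varepsilon_k\in\mc B(\bs\omega^{k+1})$, while $-\mc A(\bs\omega^*)\in\mc B(\bs\omega^*)$.

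The first, and hardest, step is the key recursion. Pairing the two inclusions and using monotonicity of $\mc B$ gives a three-point inequality in the $\Phi$-norm in which $\langle\mc A(\tilde{\bs\omega}^k)-\mc A(\bs\omega^*),\tilde{\bs\omega}^k-\bs\omega^*\rangle\ge0$ by monotonicity of $\mc A$, while the remaining products are split by Young's inequality, the Lipschitz bound of $\Phi^{-1}\mc A$ being used to absorb $\|\mc A(\tilde{\bs\omega}^k)-\mc A(\bs\omega^k)\|$ against $\|\bs\omega^k-\bs\omega^{k-1}\|$. Choosing the step sizes in $\Phi$ small enough relative to $\ell_\Phi$ (as in \cite{malitsky2015,cui2019}) produces, for the Lyapunov sequence $V_k:=\|\bs\omega^k-\bs\omega^*\|_\Phi^2+c\|\bs\omega^k-\bs\omega^{k-1}\|_\Phi^2$ with suitable $c>0$, a bound of the form
$$V_{k+1}\le V_k-\eta\|\bs\omega^{k+1}-\bs\omega^k\|_\Phi^2-2\langle\varepsilon_k,\bs\omega^{k+1}-\bs\omega^*\rangle+\beta_k,\qquad \eta>0,$$
with $\beta_k$ controlled by $\|\epsilon_k\|$. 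Taking $\EEk{\cdot}$: the part of the cross term that factors through the $\mc F_k$-measurable $\tilde{\bs\omega}^k$ vanishes by Assumption~\ref{ass_error}; the residual coupling with $\bs\omega^{k+1}$ (which does depend on $\epsilon_k$) is dominated, via nonexpansiveness of the resolvent, by $\|\epsilon_k\|$ times $\|\bs\omega^k-\bs\omega^*\|$ plus controllable terms, and then by the variance bound \eqref{eq_var} together with the summability of $(1/N_k)$ implied by \eqref{eq_batch}. One thus obtains $\EEk{V_{k+1}}\le(1+a_k)V_k-\eta\,\EEk{\|\bs\omega^{k+1}-\bs\omega^k\|_\Phi^2}+\theta_k$ with deterministic $\sum_k a_k<\infty$ and $\sum_k\theta_k<\infty$.

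Then I would invoke the Robbins--Siegmund lemma: $V_k$ converges a.s.\ and $\sum_k\|\bs\omega^{k+1}-\bs\omega^k\|_\Phi^2<\infty$ a.s., hence $\bs\omega^{k+1}-\bs\omega^k\to0$ a.s., and $(\bs\omega^k)$ is bounded a.s.\ (its $\bs x$-part lies in the compact $\bs\Omega$, and boundedness of the rest follows from convergence of $V_k$). Along a subsequence $\bs\omega^{k_j}\to\bar{\bs\omega}$, and then $\tilde{\bs\omega}^{k_j}\to\bar{\bs\omega}$ as well; since $\sum_k\EEk{\|\epsilon_k\|}<\infty$ forces $\epsilon_k\to0$ a.s., passing to the limit in $\Phi(\bs\omega^{k_j}-\bs\omega^{k_j+1})-\mc A(\tilde{\bs\omega}^{k_j})-\varepsilon_{k_j}\in\mc B(\bs\omega^{k_j+1})$ and using Lipschitz continuity of $\mc A$ and closedness of the graph of the maximally monotone $\mc B$ yields $-\mc A(\bar{\bs\omega})\in\mc B(\bar{\bs\omega})$, i.e.\ $\bar{\bs\omega}\in\op{zer}(\mc A+\mc B)$. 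By Lemma~\ref{lemma_zeri}(i) and uniqueness of the SVI solution, $\bar{\bs x}=\bs x^*$; equivalently, the weak sharpness condition \eqref{eq_weak}, valid here because a unique solution is a strong (hence weakly sharp) solution as discussed in Section~\ref{sec_coco}, is what forces the primal component of any limit point to be $\bs x^*$. Finally, choosing $\bs\omega^*=\bar{\bs\omega}$ in the already-established a.s.\ convergence of $V_k$ and using $\|\bs\omega^k-\bs\omega^{k-1}\|\to0$ gives $V_k\to0$ a.s., so $\bs\omega^k\to\bar{\bs\omega}$ and, in particular, $\bs x^k\to\bs x^*$ a.s.

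I expect the main obstacle to be the second paragraph: producing a Malitsky-type descent for the \emph{reflected} gradient step when the pseudogradient is only sampled, that is, selecting $V_k$ (the coefficient $c$) and the step sizes in $\Phi$ so that, after conditioning, the noise cross-terms are either zero or summable and the recursion is a genuine almost-supermartingale. This is already delicate in the deterministic PRG and is further complicated here by the non-self-adjoint coupling block in $\mc A$ coming from the Laplacian and the constraint matrix, so the step-size conditions must be stated with care.
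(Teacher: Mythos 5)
Your proposal reaches the right conclusion, but it takes a genuinely different route from the paper. The paper's own proof is a pure reduction: it only verifies that Algorithm~\ref{algo_no} is the compact SPRG iteration \eqref{eq_algo2} for the splitting $\mc A+\mc B$ in \eqref{eq_AB}, and then invokes \cite[Prop.~10]{cui2019} together with Lemma~\ref{lemma_zeri} (zeros of $\mc A+\mc B$ are v-SGNEs), Lemma~\ref{lemma_op} (monotonicity and Lipschitz continuity of $\Phi^{-1}\mc A$, maximal monotonicity of $\Phi^{-1}\mc B$) and the uniqueness assumption; no stochastic analysis is redone. You instead reconstruct the content of that cited proposition (Malitsky-type one-step inequality, stochastic quasi-Fej\'er recursion, Robbins--Siegmund, limit-point identification via graph closedness, then Opial-type conclusion). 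What your route buys is that it makes explicit \emph{where} uniqueness is actually used -- only to pin down the primal component of the limit point -- which is precisely the point the paper glosses over when it replaces the weak-sharpness hypothesis of \cite{cui2019} by uniqueness; what the paper's route buys is brevity, at the price of leaning entirely on the external result.

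Two places in your sketch deserve care. First, since $\mc B$ in \eqref{eq_AB} is a concatenation of normal cones (with a zero block for $\bs z$), the resolvent in \eqref{eq_algo2} is a projection, in the $\Phi$-metric, onto $\bs\Omega\times\RR^{Nm}\times\RR^{Nm}_{\geq 0}$; so you are literally in the preconditioned-VI setting of \cite{malitsky2015,cui2019}, and the delicate term is not the one you name but $\langle\mc A(\bs\omega^*),\tilde{\bs\omega}^k-\bs\omega^*\rangle$, since the reflected point $\tilde{\bs\omega}^k$ need not be feasible: it is handled by writing $\tilde{\bs\omega}^k-\bs\omega^*=(\bs\omega^k-\bs\omega^*)+(\bs\omega^k-\bs\omega^{k-1})$, using the VI property at $\bs\omega^*$ for the first summand and carrying $\langle\mc A(\bs\omega^*),\bs\omega^k-\bs\omega^{k-1}\rangle$ inside the Lyapunov function, not by Young/Lipschitz absorption alone. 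Second, your noise treatment needs the conditional \emph{second} moment of $\epsilon^k$ (Assumption~\ref{ass_batch} as stated bounds the first moment; the variance-reduction lemmas cited in the paper supply the squared bound), and setting $\bs\omega^*=\bar{\bs\omega}$ at the end requires the standard countable-dense-set argument because $\bar{\bs\omega}$ is random and only the $\bs x$-component of $\op{zer}(\mc A+\mc B)$ is unique. Also, your parenthetical appeal to weak sharpness is redundant (and, by Example~\ref{ex_academic}, uniqueness does not imply \eqref{eq_weak} on unbounded sets); your own limit-point identification already does the job.
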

\begin{proof}
The iterations of Algorithm \ref{algo_no} are obtained by expanding (\ref{eq_algo2}) and solving for $\bs x_{k}$, $\bs z_{k}$ and $\bs \lambda_{k}$. Therefore, Algorithm \ref{algo_no} is a SPRG iteration as in (\ref{eq_algo2}). The convergence of the sequence $(\boldsymbol x^k,\boldsymbol\lambda^k)$ to a v-GNE of the game in \eqref{eq_game} then follows by \cite[Prop. 10]{cui2019} and Lemma \ref{lemma_zeri} since $\Phi^{-1}\mc A$ is monotone by Lemma \ref{lemma_op} and has a unique solution.
\end{proof}

\vspace{-.1cm}
\section{Convergence under cocoercivity}\label{sec_conv_coco}
We know from Section \ref{sec_coco} that cocoercive operators have the weak sharpness property. Here we consider this case.

\begin{assumption}\label{ass_coco}
$\FF$ is $\beta$-cocoercive for some $\beta>0$.
\fineass
\end{assumption}
\begin{remark}
If a function is $\beta$-cocoercive, it is also $1/\beta$-Lipschitz continuous \cite[Remark 4.15]{bau2011}.\fineass
\end{remark}
We note that the operator $\mc A$ in \eqref{eq_AB} contains a skew symmetric matrix that is not cocoercive. For this reason we consider the splitting in \eqref{eq_CD}. While Lemma \ref{lemma_zeri} guarantees that the zeros of $\mc C+\mc D$ are the same as the zeros of $\mc T$ in \eqref{eq_T}, we now show the necessary monotonicity properties of the extended operators $\mc C$ and $\mc D$ in \eqref{eq_CD}. 
\begin{lemma}\label{propertiesAB}
Let Assumptions \ref{ass_J} and \ref{ass_coco} hold and let $\Psi\succ0$. Then, $\mc C$ and $\mc D$ in (\ref{eq_CD}) have the following properties.
\begin{itemize}
\item[(i)] $\mc C$ is $\theta$-cocoercive where $0<\theta \leq\min \left\{\frac{1}{2 d^{*}}, \beta\right\}$ and $d^*$ is the maximum weighted degree of $\mc G^\lambda$;
\item[(ii)] $\mc D$ is maximally monotone;
\item[(iii)] $\Psi^{-1}\mc C$ is $\theta\tau$-cocoercive, with $\tau=\frac{1}{|\Psi^{-1}|}$;
\item[(iv)] $\Psi^{-1}\mc D$ is maximally monotone. 
\end{itemize}
\end{lemma}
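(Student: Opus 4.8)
The plan is to prove each of the four claims in turn, reducing (iii) and (iv) to (i) and (ii) by a change-of-norm argument, exactly as in the analogous lemmas for the non-preconditioned splitting. For part (ii), the operator $\mc D$ in \eqref{eq_CD} is the sum of the normal-cone operator $\op{col}(\op{N}_\Omega(\bs x),\bs 0_{mN},\op{N}_{\RR_{\geq 0}^m}(\bs\lambda))$, which is maximally monotone as the subdifferential of a proper lsc convex function (an indicator), and the bounded linear skew-symmetric operator with matrix $\left[\begin{smallmatrix} 0 & 0 & \bf A^\top \\ 0 & 0 & \bf L \\ -\bf A & -\bf L & 0 \end{smallmatrix}\right]$, which is maximally monotone with full domain. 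By \cite[Cor. 24.4]{bau2011}, since the second summand has domain $\RR^{n+2mN}$, the sum is maximally monotone; this is essentially the same computation as in \cite{yi2019}.

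The core of the lemma is part (i). Write $\mc C = \mc C_1 + \mc C_2$ where $\mc C_1:\op{col}(\bs x,\bs z,\bs\lambda)\mapsto\op{col}(\FF(\bs x),0,\bs b)$ and $\mc C_2:\op{col}(\bs x,\bs z,\bs\lambda)\mapsto\op{col}(0,0,\bf L\bs\lambda)$. The first summand is block-diagonal in the $\bs x$-block (where $\FF$ is $\beta$-cocoercive by Assumption \ref{ass_coco}) and constant in the $\bs z,\bs\lambda$-blocks, so it is $\beta$-cocoercive overall. The second summand is linear, symmetric, positive semidefinite (since $\bf L = L\otimes\op{I}_m\succeq 0$ by Assumption \ref{ass_graph}) with operator norm equal to the largest eigenvalue of $\bf L$, which is bounded by $2d^*$ with $d^*$ the maximum weighted degree; a symmetric PSD linear map $M$ with $\|M\|\le 1/\eta$ is $\eta$-cocoercive, so $\mc C_2$ is $\tfrac{1}{2d^*}$-cocoercive. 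The sum of an $\beta_1$-cocoercive and a $\beta_2$-cocoercive operator is $\min\{\beta_1,\beta_2\}$-cocoercive — this is the one step that needs care, since cocoercivity is not additive in general; the standard route is via the Cauchy--Schwarz/Young inequality, e.g. \cite[Prop. 4.12]{bau2011} or a direct estimate, giving $\theta\le\min\{\tfrac{1}{2d^*},\beta\}$ as claimed.

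For parts (iii) and (iv), I would pass to the $\Psi$-induced inner product. Since $\Psi\succ 0$ (this must itself be checked, or assumed, from the step-size conditions; here it is hypothesized), $\Psi^{-1}\mc D$ is maximally monotone in $\langle\cdot,\cdot\rangle_\Psi$ because $\bs\omega\in\Psi^{-1}\mc D(\bs\omega') \iff \Psi\bs\omega\in\mc D(\bs\omega')$, and the graph of $\mc D$ being maximally monotone in the standard inner product transfers to $\Psi^{-1}\mc D$ being maximally monotone in $\langle\cdot,\cdot\rangle_\Psi$ — a routine verification using $\langle\Psi^{-1}u,v\rangle_\Psi=\langle u,v\rangle$. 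For (iii), for any $\bs\omega_1,\bs\omega_2$, writing $u_i=\mc C(\bs\omega_i)$,
\begin{equation*}
\langle\Psi^{-1}u_1-\Psi^{-1}u_2,\bs\omega_1-\bs\omega_2\rangle_\Psi=\langle u_1-u_2,\bs\omega_1-\bs\omega_2\rangle\ge\theta\|u_1-u_2\|^2\ge\theta\tau\|\Psi^{-1}u_1-\Psi^{-1}u_2\|^2_\Psi,
\end{equation*}
where the last step uses $\|\Psi^{-1}u\|_\Psi^2=\langle u,\Psi^{-1}u\rangle\le\|\Psi^{-1}\|\,\|u\|^2$, i.e. $\|u\|^2\ge\tau\|\Psi^{-1}u\|_\Psi^2$ with $\tau=1/\|\Psi^{-1}\|$. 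The main obstacle is the additivity-of-cocoercivity step in (i): one must not simply add the two cocoercivity constants, and getting the sharp (or at least the stated) constant $\min\{\tfrac{1}{2d^*},\beta\}$ requires the standard inequality-manipulation argument rather than a one-line citation. Everything else is bookkeeping: identifying the skew-symmetric block, bounding $\lambda_{\max}(\bf L)\le 2d^*$, and the change-of-norm transfer, all of which parallel \cite{yi2019,franci2019}.
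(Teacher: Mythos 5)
Your parts (ii)--(iv) are correct and follow the same route as the result the paper invokes (the paper's own ``proof'' is just a citation to \cite[Lem.~5, Lem.~7]{yi2019}): the normal-cone part of $\mc D$ is maximally monotone as a subdifferential, the skew-symmetric linear part is maximally monotone with full domain, so the sum is maximally monotone; and the transfer to $\Psi^{-1}\mc C$, $\Psi^{-1}\mc D$ under the $\Psi$-induced inner product, with $\tau=1/\norm{\Psi^{-1}}$, is exactly the standard change-of-metric computation and your chain of inequalities in (iii) is right.

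The gap is in (i), at precisely the step you flag. The general principle you invoke --- ``the sum of a $\beta_1$-cocoercive and a $\beta_2$-cocoercive operator is $\min\{\beta_1,\beta_2\}$-cocoercive'' --- is false, and the Cauchy--Schwarz/Young route you point to cannot deliver it: from $\normsq{a+b}\le(1+t)\normsq{a}+(1+t^{-1})\normsq{b}$ the best constant one gets for a generic sum is $\beta_1\beta_2/(\beta_1+\beta_2)$, which is strictly smaller than $\min\{\beta_1,\beta_2\}$ (already $\op{Id}+\op{Id}=2\op{Id}$ is only $\tfrac12$-cocoercive although each summand is $1$-cocoercive). To get the constant $\min\{\beta,\tfrac{1}{2d^*}\}$ stated in the lemma you must use the block structure of $\mc C$: the differences $\mc C_1(\bs\omega_1)-\mc C_1(\bs\omega_2)=\op{col}(\FF(\bs x_1)-\FF(\bs x_2),0,0)$ and $\mc C_2(\bs\omega_1)-\mc C_2(\bs\omega_2)=\op{col}(0,0,{\bf{L}}(\bs\lambda_1-\bs\lambda_2))$ live in orthogonal coordinate blocks, so $\normsq{\mc C(\bs\omega_1)-\mc C(\bs\omega_2)}=\normsq{\FF(\bs x_1)-\FF(\bs x_2)}+\normsq{{\bf{L}}(\bs\lambda_1-\bs\lambda_2)}$ with no cross term, and then $\beta\normsq{\FF(\bs x_1)-\FF(\bs x_2)}+\tfrac{1}{2d^*}\normsq{{\bf{L}}(\bs\lambda_1-\bs\lambda_2)}\ge\min\{\beta,\tfrac{1}{2d^*}\}\normsq{\mc C(\bs\omega_1)-\mc C(\bs\omega_2)}$ follows immediately (your bounds $\lambda_{\max}({\bf{L}})\le 2d^*$ and the PSD-cocoercivity of ${\bf{L}}$ are fine). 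Note that the weaker harmonic-mean constant would still give ``some $\theta\le\min\{\beta,\tfrac{1}{2d^*}\}$'', so the statement read literally would survive, but the constant $\theta$ is reused in Assumption~\ref{ass_phi} ($\tau<\theta/8$), so the sharper block-orthogonality argument --- which is what underlies the cited Lemma~5 of \cite{yi2019} --- is the one you should give.
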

\begin{proof}
It follows from \cite[Lem. 5]{yi2019} and \cite[Lem. 7]{yi2019}.
\end{proof}

Furthermore, since the preconditioning matrix must be positive definite, we postulate the following assumption on the step sizes \cite{yi2019}. 
\begin{assumption}\label{ass_phi}
Let $\theta$ be the cocoercivity constant as in Lemma \ref{propertiesAB}, $\tau=\frac{1}{|\Psi^{-1}|}\in(0,\frac{\theta}{8})$ and the step sizes $\alpha$, $\nu$ and $\sigma$ satisfy, for all $i\in\mc I$,
\begin{equation*}\label{parameters_phi}
\begin{aligned}
0&<\alpha_{i} \leq\hspace{-.1cm}\textstyle{(\max _{j=1, \ldots, n_{i}}\{\sum\nolimits_{k=1}^{m}|[A_{i}^{T}]_{j k}|\}+\tau)^{-1}} \\ 
0&<\nu_{i} \leq(2 d_{i}+\tau)^{-1}\\
0&<\sigma_{i} \leq\hspace{-.1cm}\textstyle{(\max _{j=1, \ldots, m}\{\sum\nolimits_{k=1}^{k_{i}}|[A_{i}]_{j k}|\}+2 d_{i}+\tau)^{-1}}\\
\end{aligned}
\end{equation*}
where $[A_i^\top]_{jk}$ indicates the entry $(j,k)$ of the matrix $A_i^\top$. \fineass
\end{assumption}

We are now ready to state our convergence result.
\begin{theorem}\label{theo_conv}
Let Assumptions \ref{ass_J}-\ref{ass_graph} and \ref{ass_coco}-\ref{ass_phi} hold. Then, the sequence $(\bs x_k)_{k\in\NN}$ generated by Algorithm \ref{algo_si} with $F^{\textup{SA}}$ as in \eqref{eq_F_SA} converges a.s. to a v-SGNE of the game in \eqref{eq_game}.
\end{theorem}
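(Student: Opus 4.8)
The plan is to reduce Theorem~\ref{theo_conv} (the cocoercive case) to an existing convergence result for the stochastic projected--reflected--gradient iteration, exactly as was done for Algorithm~\ref{algo_no} in the previous section, but now invoking the cocoercivity branch of the analysis. First I would recall that Algorithm~\ref{algo_si}, written in compact form, is precisely the SpPRG iteration \eqref{eq_algo}, i.e. $\bs\omega^{k+1}=(\op{Id}+\Psi^{-1}\mc D)^{-1}(\bs\omega^k-\Psi^{-1}\hat{\mc C}(2\bs\omega^k-\bs\omega^{k-1}))$, with $\Psi$ as in \eqref{phi} and $\hat{\mc C}$ the stochastic approximation \eqref{eq_A_approx} of $\mc C$. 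This identification is the content of the paragraph preceding the theorem and is a routine (if tedious) expansion, so I would merely state it. The stochastic error then satisfies Assumptions~\ref{ass_error}--\ref{ass_batch} by construction of $F^{\textup{SA}}$ and the variable sample-size rule \eqref{eq_batch}--\eqref{eq_var}.

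Next I would collect the operator-theoretic ingredients. By Lemma~\ref{lemma_zeri}, $\op{zer}(\mc C+\mc D)=\op{zer}(\mc A+\mc B)\neq\varnothing$, and any $\bs\omega^*=\op{col}(\bs x^*,\bs z^*,\bs\lambda^*)\in\op{zer}(\mc C+\mc D)$ has $\bs x^*$ a v-SGNE of \eqref{eq_game} with $\bs\lambda^*=\uno_N\otimes\lambda^*$. By Lemma~\ref{propertiesAB}, under Assumptions~\ref{ass_J} and \ref{ass_coco} the preconditioned forward operator $\Psi^{-1}\mc C$ is $\theta\tau$-cocoercive and $\Psi^{-1}\mc D$ is maximally monotone; moreover Assumption~\ref{ass_phi} guarantees $\Psi\succ0$ (it is symmetric and diagonally dominant by \eqref{parameters_phi}), so that the $\Psi$-induced inner product is well defined and the resolvent $(\op{Id}+\Psi^{-1}\mc D)^{-1}$ is firmly nonexpansive in $\norm{\cdot}_\Psi$. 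Since cocoercivity implies the monotone$^+$ property, the solution set is F-unique, hence the weak sharpness condition \eqref{eq_weak} holds for $\Psi^{-1}\mc C$ on the extended feasible set, as discussed in Section~\ref{sec_coco}. This is exactly the structural hypothesis under which the SPRG convergence theorem of \cite{cui2019} applies.

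With these facts in hand, I would invoke the relevant proposition from \cite{cui2019} (the cocoercive instantiation, the analogue of \cite[Prop.~10]{cui2019} used in the monotone case): the step-size bound $\tau\in(0,\theta/8)$ from Assumption~\ref{ass_phi} is precisely the condition ensuring that the SPRG iteration \eqref{eq_algo} produces a sequence $(\bs\omega^k)_k$ that is almost surely bounded and converges a.s.\ to some $\bs\omega^*\in\op{zer}(\mc C+\mc D)$; the variance-reduction Assumption~\ref{ass_batch} controls the stochastic error terms via a Robbins--Siegmund-type supermartingale argument. Projecting onto the $\bs x$-component and applying Lemma~\ref{lemma_zeri}(i) yields that $(\bs x^k)_k$ converges a.s.\ to a v-SGNE of \eqref{eq_game}, which is the claim.

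The main obstacle is verifying that the hypotheses of the external convergence theorem are met \emph{in the $\Psi$-metric rather than the Euclidean one}: cocoercivity, firm nonexpansiveness of the resolvent, and the weak sharpness inequality must all be read with respect to $\norm{\cdot}_\Psi$, and Lemma~\ref{propertiesAB}(iii)--(iv) together with $\Psi\succ0$ is exactly what licenses this change of metric. A secondary point requiring care is that the skew-symmetric coupling in $\mc C$ (the ${\bf L}\bs\lambda$ block) does not spoil cocoercivity because it is \emph{contained inside} $\mc C$ only through the symmetric Laplacian term, not through the skew part — that skew part having been absorbed into $\mc D$ in the splitting \eqref{eq_CD}, which is the whole reason this second splitting is used here instead of \eqref{eq_AB}; Lemma~\ref{propertiesAB}(i) records the resulting cocoercivity constant $\theta\le\min\{1/(2d^*),\beta\}$, and one should make sure the $d^*$ there is consistent with the degree bounds entering \eqref{parameters_phi}.
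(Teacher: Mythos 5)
Your proposal is correct and follows essentially the same route as the paper: rewrite Algorithm~\ref{algo_si} as the compact SPRG iteration \eqref{eq_algo}, then invoke the convergence result of \cite[Prop.~10]{cui2019} together with Lemma~\ref{lemma_zeri} and the cocoercivity/maximal monotonicity of $\Psi^{-1}\mc C$ and $\Psi^{-1}\mc D$ from Lemma~\ref{propertiesAB} under Assumption~\ref{ass_phi}. Your additional remarks on the $\Psi$-induced metric, the weak sharpness implied by cocoercivity, and the role of the splitting \eqref{eq_CD} simply make explicit what the paper leaves implicit.
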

\begin{proof}
The iterations of Algorithm \ref{algo_si} are obtained by expanding (\ref{eq_algo}) and solving for $\bs x_{k}$, $\bs z_{k}$ and $\bs \lambda_{k}$. Therefore, Algorithm \ref{algo_si} is a SPRG iteration as in (\ref{eq_algo}). The convergence of the sequence $(\boldsymbol x^k,\boldsymbol\lambda^k)$ to a v-GNE of the game in \eqref{eq_game} then follows by \cite[Prop. 10]{cui2019} and Lemma \ref{lemma_zeri} since $\Psi^{-1}\mc C$ is cocoercive by Lemma \ref{propertiesAB}.
\end{proof}

\vspace{-.1cm}
\section{Numerical Simulations}\label{sec_sim}

Let us propose some numerical evaluations to validate the analysis with a realistic application to an electricity market with capacity constraints, casted as a network Cournot game with markets capacity constraints \cite{yi2019,yu2017,cui2019}. 

We consider a set of $N=20$ companies selling energy to $m=7$ markets. Each generator decides the quantity of energy $x_i$ to deliver to the $n_i$ markets it is connected with. Each company $i$ has a a production limit of the form $0 < x_i < \gamma_i$ where each component of $\gamma_i$ is randomly drawn from $[1, 1.5]$. Each company has a cost of production $c_i(x_i) = 1.5 x_i+q_i$, where $q_i$ is a given constant. 
Each market $j$ has a bounded capacity $b_j$, randomly drawn from $[0.5, 1]$. 
The collective constraints are then given by $A\boldsymbol x\leq b$ where $A=[A_1,\dots,A_N]$.
The prices of the markets are collected in $P:\RR^m\times \Xi\to\RR^m$. The uncertainty variable $\xi$, which represents the demand uncertainty, appears in this functional. $P$ is supposed to be a linear function and reads as $P(\xi) = \bar P(\xi)-DA\boldsymbol x$. Each component of $\bar P =\op{col}(\bar P_1(\xi),\dots,\bar P_7(\xi))$ is taken with a normal distribution with mean $3$ and finite variance. The entries of $D$ are randomly taken in $[0.5,1]$. 
The cost function of each agent is given by
$\JJ_i(x_i,x_{-i},\xi)=c_i(x_i)-\EE[P(\xi)^\top(A\boldsymbol x)A_ix_i].$
and it is influenced by the variables of the companies selling in the same market as in \cite[Fig. 1]{yi2019}. The dual variables graph is a cycle graph with the addiction of the edges $(2,15)$ and $(6,13)$ \cite{yi2019}. 

We simulate the SpFB, the forward-backward-forward (SFBF) and the extragradient (SEG) algorithms to make a comparison with our SPRG and SpPRG, using the SA scheme. The parameters $\alpha$, $\nu$ and $\sigma$ are taken to be the highest possible that guarantee convergence. 
As a measure of the distance from the solution, we consider the residual, $\op{res}(x^k)=\norm{x^k-\op{proj}_{C}(x^k-F(x^k))}$, which is equal zero if and only if $x$ is a solution.
The plot in Fig. \ref{plot_sol} shows how the residual varies in the number of iterations. The performances of SpPRG and SPRG are very similar and the difference in the trajectory is related to the different step sizes which depend on the Lipschitz constant of $\mc C$ in \eqref{eq_CD} and $\mc A$ in \eqref{eq_AB} respectively.


\begin{figure}[t]
\centering
\includegraphics[width=.9\columnwidth]{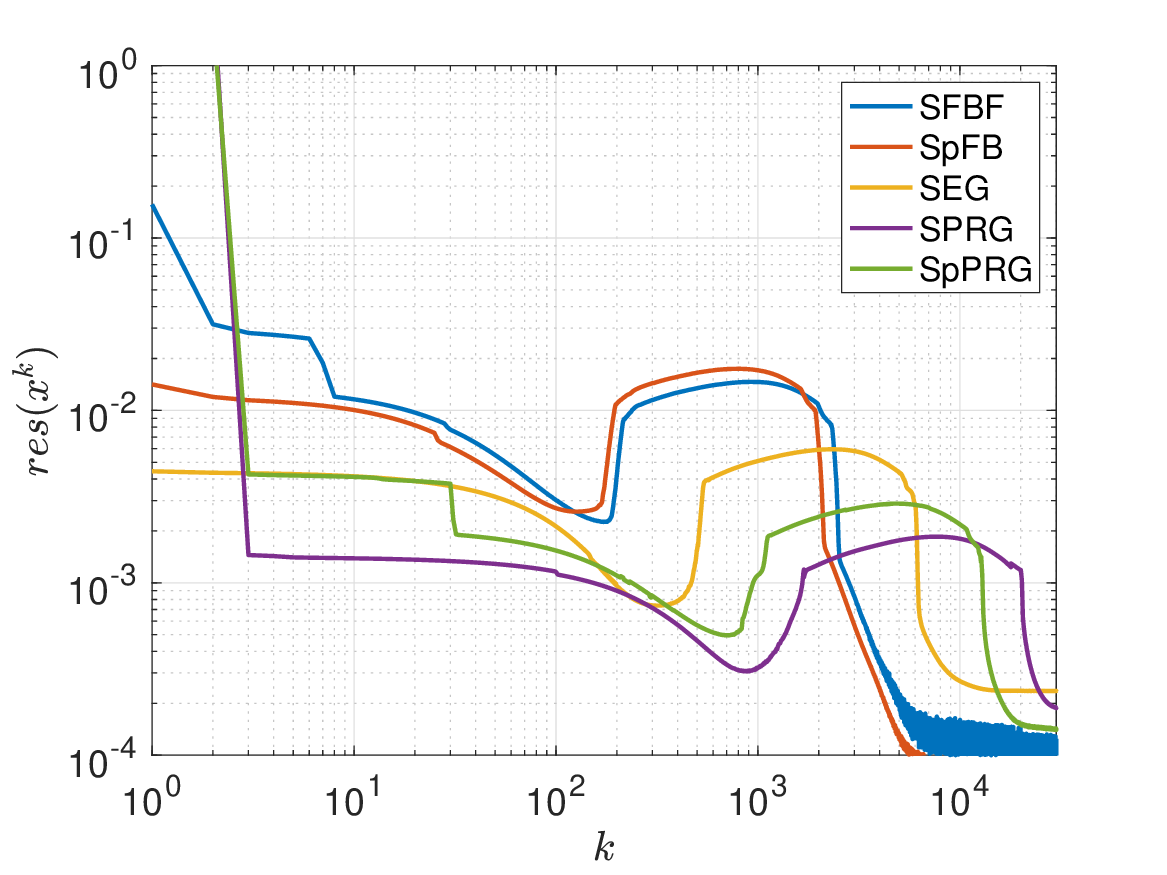}
\caption{Relative distance of the primal variable from the solution.}\label{plot_sol}
\end{figure}
\vspace{-.1cm}

\section{Conclusion}

The stochastic projected reflected gradient algorithm is applicable to distributed stochastic generalized Nash equilibrium seeking. To guarantee convergence to a solution and to obtain a distributed algorithm, preconditioning may be used and the pseudogradient mapping should be cocoercive. However, should the equilibrium be unique, the cocoercivity assumption can be reduced to mere monotonicity. 

\vspace{-.1cm}

\bibliographystyle{IEEEtran}
\bibliography{Biblio}

\begin{thebibliography}{10}
\providecommand{\url}[1]{#1}
\csname url@samestyle\endcsname
\providecommand{\newblock}{\relax}
\providecommand{\bibinfo}[2]{#2}
\providecommand{\BIBentrySTDinterwordspacing}{\spaceskip=0pt\relax}
\providecommand{\BIBentryALTinterwordstretchfactor}{4}
\providecommand{\BIBentryALTinterwordspacing}{\spaceskip=\fontdimen2\font plus
\BIBentryALTinterwordstretchfactor\fontdimen3\font minus
  \fontdimen4\font\relax}
\providecommand{\BIBforeignlanguage}[2]{{%
\expandafter\ifx\csname l@#1\endcsname\relax
\typeout{** WARNING: IEEEtran.bst: No hyphenation pattern has been}%
\typeout{** loaded for the language `#1'. Using the pattern for}%
\typeout{** the default language instead.}%
\else
\language=\csname l@#1\endcsname
\fi
#2}}
\providecommand{\BIBdecl}{\relax}
\BIBdecl

\bibitem{koshal2013}
J.~Koshal, A.~Nedic, and U.~V. Shanbhag, ``Regularized iterative stochastic
  approximation methods for stochastic variational inequality problems,''
  \emph{IEEE Transactions on Automatic Control}, vol.~58, no.~3, pp. 594--609,
  2013.

\bibitem{yu2017}
C.-K. Yu, M.~Van Der~Schaar, and A.~H. Sayed, ``Distributed learning for
  stochastic generalized {Nash} equilibrium problems,'' \emph{IEEE Transactions
  on Signal Processing}, vol.~65, no.~15, pp. 3893--3908, 2017.

\bibitem{franci2020tac}
B.~Franci and S.~Grammatico, ``A distributed forward-backward algorithm for
  stochastic generalized {Nash} equilibrium seeking,'' \emph{IEEE Transactions
  on Automatic Control}, 2020.

\bibitem{abada2013}
I.~Abada, S.~Gabriel, V.~Briat, and O.~Massol, ``A generalized
  {Nash}--{Cournot} model for the northwestern european natural gas markets
  with a fuel substitution demand function: The gammes model,'' \emph{Networks
  and Spatial Economics}, vol.~13, no.~1, pp. 1--42, 2013.

\bibitem{watling2006}
D.~Watling, ``User equilibrium traffic network assignment with stochastic
  travel times and late arrival penalty,'' \emph{European Journal of
  Operational Research}, vol. 175, no.~3, pp. 1539--1556, 2006.

\bibitem{henrion2007}
R.~Henrion and W.~R{\"o}misch, ``On m-stationary points for a stochastic
  equilibrium problem under equilibrium constraints in electricity spot market
  modeling,'' \emph{Applications of Mathematics}, vol.~52, no.~6, pp. 473--494,
  2007.

\bibitem{yi2019}
P.~Yi and L.~Pavel, ``An operator splitting approach for distributed
  generalized {Nash} equilibria computation,'' \emph{Automatica}, vol. 102, pp.
  111--121, 2019.

\bibitem{belgioioso2018}
G.~Belgioioso and S.~Grammatico, ``Projected-gradient algorithms for
  generalized equilibrium seeking in aggregative games are preconditioned
  forward-backward methods,'' in \emph{2018 European Control Conference
  (ECC)}.\hskip 1em plus 0.5em minus 0.4em\relax IEEE, 2018, pp. 2188--2193.

\bibitem{robbins1951}
H.~Robbins and S.~Monro, ``A stochastic approximation method,'' \emph{The
  annals of mathematical statistics}, pp. 400--407, 1951.

\bibitem{cui2016}
S.~Cui and U.~V. Shanbhag, ``On the analysis of reflected gradient and
  splitting methods for monotone stochastic variational inequality problems,''
  in \emph{2016 IEEE 55th Conference on Decision and Control (CDC)}.\hskip 1em
  plus 0.5em minus 0.4em\relax IEEE, 2016, pp. 4510--4515.

\bibitem{staudigl2019}
R.~Bot, P.~Mertikopoulos, M.~Staudigl, and P.~Vuong, ``Mini-batch
  forward-backward-forward methods for solving stochastic variational
  inequalities,'' \emph{Stochastic Systems}, 2020.

\bibitem{iusem2017}
A.~Iusem, A.~Jofr{\'e}, R.~I. Oliveira, and P.~Thompson, ``Extragradient method
  with variance reduction for stochastic variational inequalities,'' \emph{SIAM
  Journal on Optimization}, vol.~27, no.~2, pp. 686--724, 2017.

\bibitem{cui2019}
S.~Cui and U.~V. Shanbhag, ``On the analysis of variance-reduced and randomized
  projection variants of single projection schemes for monotone stochastic
  variational inequality problems,'' \emph{arXiv preprint arXiv:1904.11076},
  2019.

\bibitem{malitsky2015}
Y.~Malitsky, ``Projected reflected gradient methods for monotone variational
  inequalities,'' \emph{SIAM Journal on Optimization}, vol.~25, no.~1, pp.
  502--520, 2015.

\bibitem{bau2011}
H.~H. Bauschke, P.~L. Combettes \emph{et~al.}, \emph{Convex analysis and
  monotone operator theory in Hilbert spaces}.\hskip 1em plus 0.5em minus
  0.4em\relax Springer, 2011, vol. 408.

\bibitem{facchinei2007}
F.~Facchinei and J.-S. Pang, \emph{Finite-dimensional variational inequalities
  and complementarity problems}.\hskip 1em plus 0.5em minus 0.4em\relax
  Springer Science \& Business Media, 2007.

\bibitem{ravat2011}
U.~Ravat and U.~V. Shanbhag, ``On the characterization of solution sets of
  smooth and nonsmooth convex stochastic {Nash} games,'' \emph{SIAM Journal on
  Optimization}, vol.~21, no.~3, pp. 1168--1199, 2011.

\bibitem{facchinei2007vi}
F.~Facchinei, A.~Fischer, and V.~Piccialli, ``On generalized {Nash} games and
  variational inequalities,'' \emph{Operations Research Letters}, vol.~35,
  no.~2, pp. 159--164, 2007.

\bibitem{burke1993}
J.~V. Burke and M.~C. Ferris, ``Weak sharp minima in mathematical
  programming,'' \emph{SIAM Journal on Control and Optimization}, vol.~31,
  no.~5, pp. 1340--1359, 1993.

\bibitem{marcotte1998}
P.~Marcotte and D.~Zhu, ``Weak sharp solutions of variational inequalities,''
  \emph{SIAM Journal on Optimization}, vol.~9, no.~1, pp. 179--189, 1998.

\bibitem{yousefian2014}
F.~Yousefian, A.~Nedi{\'c}, and U.~V. Shanbhag, ``Optimal robust smoothing
  extragradient algorithms for stochastic variational inequality problems,'' in
  \emph{53rd IEEE Conference on Decision and Control}.\hskip 1em plus 0.5em
  minus 0.4em\relax IEEE, 2014, pp. 5831--5836.

\bibitem{iusem1998}
A.~N. Iusem, ``On some properties of paramonotone operators,'' \emph{Journal of
  Convex Analysis}, vol.~5, pp. 269--278, 1998.

\bibitem{crouzeix2000}
J.-P. Crouzeix, P.~Marcotte, and D.~Zhu, ``Conditions ensuring the
  applicability of cutting-plane methods for solving variational
  inequalities,'' \emph{Mathematical Programming}, vol.~88, no.~3, pp.
  521--539, 2000.

\bibitem{hu2011}
Y.~Hu and W.~Song, ``Weak sharp solutions for variational inequalities in
  banach spaces,'' \emph{Journal of mathematical analysis and applications},
  vol. 374, no.~1, pp. 118--132, 2011.

\end{thebibliography}

\end{document}